\newtheorem{lemma}{Lemma}[section]
\newtheorem{remark}[lemma]{Remark}
\newtheorem{theorem}[lemma]{Theorem}
\newtheorem{conjecture}[lemma]{Conjecture}
\numberwithin{equation}{section}
\newcommand{\R}{\mathbb{R}}
\newcommand{\C}{\mathbb{C}}
\newcommand{\N}{\mathbb{N}}
\newcommand{\Z}{\mathbb{Z}}
\newcommand{\T}{\mathbb{T}}
\begin{document}
\title
[Invariant measures for BO   ]
{New non degenerate invariant measures for the Benjamin-Ono equation  }

\author[N. Tzvetkov]{Nikolay Tzvetkov}
\address[N. Tzvetkov]{Ecole Normale Sup\'erieure de Lyon, UMPA, UMR CNRS-ENSL 5669, 46, all\'ee d'Italie, 69364-Lyon Cedex 07, France} 
\email{nikolay.tzvetkov@ens-lyon.fr}
\begin{abstract}
We show that the recent work by  G\'erard-Kappeler-Topalov can be used in order to construct new non degenerate invariant measures for the Benjamin-Ono equation on
the Sobolev spaces $H^s$, $s>-1/2$. 
\end{abstract}
\maketitle
\section{Introduction}
Consider the Cauchy problem for the Benjamin-Ono equation, posed on the torus 
\begin{equation}\label{BO_pak}
\partial_t u=H \partial_x^2 u-\partial_x(u^2),\quad u\vert_{t=0}=u_0.
\end{equation}
In \eqref{BO_pak}, $u(t,\cdot)$ is a real valued function or a distribution on the torus $\T=\R/ 2\pi\Z$ and $t\in\R$ (the problem is time reversible). 
The anti-symmetric map $H$ is the Hilbert transform on periodic functions. The problem \eqref{BO_pak} is an infinite dimensional Hamiltonian system. 
The study of the Cauchy problem \eqref{BO_pak}  attracted a lot of attention, we refer to the very recent work \cite{monica} and the references therein. 
In this note, we are interested in invariant measures for \eqref{BO_pak}. The invariant measures problem for \eqref{BO_pak}  was addressed in 
 \cite{Deng, DTV, TV13a,TV13b,TV14}. Our aim is to show that the recent work by  G\'erard-Kappeler-Topalov \cite{GKP} can be used in order to construct a large family of new non degenerate invariant measures for \eqref{BO_pak}. In particular, we will construct such measures living in $H^s$ for every $s>-1/2$, which is a significant extension of \cite{Deng}. 
 \\
 
 A natural space for studying \eqref{BO_pak} is the Sobolev space 
 $$
 H^s_0(\T)=\{ u\,\colon\,\, \hat{u}(0)=0,\quad \hat{u}(n)=\overline{\hat{u}(-n)}\},
 $$ 
 endowed with the norm
 $$
 \sum_{n\in\Z}\, |n|^{2s} |\hat{u}(n)|^2\,.
 $$
The zero Fourier coefficient is invariant under \eqref{BO_pak} and one may introduce similar (affine) spaces in which the value of the zero Fourier coefficient is prescribed as a fixed real number. The analysis of \eqref{BO_pak} in such spaces does not present a considerable novelty and therefore we will restrict to the spaces  $H^s_0(\T)$. Thanks to the remarkable works \cite{GK,GKP}, we know that \eqref{BO_pak} is globally well-posed in $H^s_0(\T)$, $s>-1/2$, and the dynamics is almost periodic in time. It is a natural question whether  \cite{GK,GKP} can be used to deduce new results on the statistical description of the flow of \eqref{BO_pak}. Our goal here is to make a first step in this direction. 
\begin{theorem}\label{thm2}
Let $s>-1/2$.  Then there is a probability measure $\rho_s$ on $H^s_0(\T)$  which is invariant under the flow of the Benjamin-Ono equation, defined in \cite{GK,GKP}, and satisfies :
\begin{enumerate}
\item
 $\rho_s(H^\sigma_0(\T))=0$ for every $\sigma>s$.
 \item
 Every set of full  $\rho_s$  measure is dense in $H^s_0(\T)$. 
 \end{enumerate}
\end{theorem}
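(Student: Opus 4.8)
The plan is to carry the problem over to the Birkhoff coordinates of \cite{GK,GKP} and to realize $\rho_s$ as the pushforward of an explicit Gaussian product measure. Recall that for each $s>-1/2$ one has the Birkhoff map, a homeomorphism
\[
\Phi\colon H^s_0(\T)\longrightarrow \mathfrak h^{s+1/2}_+,\qquad
\mathfrak h^{w}_+:=\Big\{\zeta=(\zeta_n)_{n\ge1}\subset\C\ :\ \|\zeta\|_{\mathfrak h^{w}}^2:=\sum_{n\ge1}n^{2w}|\zeta_n|^2<\infty\Big\},
\]
which conjugates the Benjamin--Ono flow $S^t$ to the flow $\Psi^t$ on $\mathfrak h^{s+1/2}_+$ acting by $\big(\Psi^t(\zeta)\big)_n=e^{\,it\,\omega_n(\zeta)}\,\zeta_n$, each frequency $\omega_n(\zeta)$ being determined by the actions $(|\zeta_k|^2)_{k\ge1}$ alone; moreover, for every $\sigma\ge s$, $\Phi$ restricts to a homeomorphism of $H^\sigma_0(\T)$ onto $\mathfrak h^{\sigma+1/2}_+$, so that $H^\sigma_0(\T)=\Phi^{-1}\big(\mathfrak h^{\sigma+1/2}_+\big)$. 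It thus suffices to construct a Borel probability measure $\mu_s$ on $\mathfrak h^{s+1/2}_+$ that is $\Psi^t$-invariant for all $t\in\R$, has full topological support, and gives zero mass to $\mathfrak h^{\sigma+1/2}_+$ for every $\sigma>s$; then $\rho_s:=(\Phi^{-1})_*\mu_s$ works, because $\rho_s\big(H^\sigma_0(\T)\big)=\mu_s\big(\mathfrak h^{\sigma+1/2}_+\big)$, while a Borel measure of full support cannot charge the complement of any nonempty open set, so that each of its full-measure sets is dense.

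To build $\mu_s$, let $(g_n)_{n\ge1}$ be i.i.d.\ standard complex Gaussians on a probability space, so $|g_n|^2$ is exponential of mean $1$, let $(c_n)_{n\ge1}$ be positive reals to be chosen, and let $\mu_s$ be the law of $(c_n g_n)_{n\ge1}$. Since $g_n$ is rotation-invariant, $\mu_s$ is invariant under every coordinatewise rotation $\zeta\mapsto(e^{i\theta_n}\zeta_n)_n$; equivalently, in polar coordinates $\zeta_n=\sqrt{I_n}\,e^{i\phi_n}$, $\mu_s$ is the law of a pair in which the $I_n$ are independent with $I_n$ exponential of mean $c_n^2$ and, independently of $I=(I_n)_n$, the $\phi_n$ are i.i.d.\ uniform on $\T$, i.e.\ $\mu_s=\int(\text{Haar of }\T^{\N})\,d\nu(I)$ with $\nu$ the law of $I$. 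As $\Psi^t$ acts by $(I,\phi)\mapsto\big(I,\phi+t\,\omega(I)\big)$ and translation by the vector $t\,\omega(I)$ --- constant once $I$ is fixed --- preserves the Haar measure of $\T^{\N}$, we get $(\Psi^t)_*\mu_s=\mu_s$ for all $t$. Next, under the isometry $\zeta_n\mapsto n^{s+1/2}\zeta_n$ identifying $\mathfrak h^{s+1/2}_+$ with $\ell^2$, the measure $\mu_s$ becomes the centred Gaussian measure with diagonal covariance whose eigenvalues $a_n:=n^{2s+1}c_n^2$ are all strictly positive; provided $\sum_n a_n<\infty$, this is a Radon Gaussian measure with injective covariance, hence of full support, and therefore so is $\mu_s$ on $\mathfrak h^{s+1/2}_+$.

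It remains to calibrate $(c_n)$, equivalently $a_n=n^{2s+1}c_n^2$, so that $\mu_s$ sits exactly at regularity $s$. We need, first, $\sum_n a_n<\infty$, which by $\mathbb E\,\|\zeta\|_{\mathfrak h^{s+1/2}}^2=\sum_n a_n$ gives $\mu_s\big(\mathfrak h^{s+1/2}_+\big)=1$ as well as the full support just discussed; and, second, $\sum_n n^{2(\sigma-s)}a_n=\infty$ for every $\sigma>s$, i.e.\ $\sum_n n^{\varepsilon}a_n=\infty$ for every $\varepsilon>0$. The choice $a_n:=e^{-\sqrt{\log n}}/n$ for $n\ge2$ (with $a_1$ arbitrary) satisfies both: substituting $u=\log x$ shows $\sum_{n\ge2}a_n\asymp\int_1^\infty e^{-\sqrt u}\,du<\infty$, whereas $\sum_{n\ge2}n^{\varepsilon}a_n\asymp\int_1^\infty e^{\varepsilon u-\sqrt u}\,du=\infty$. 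Finally one upgrades the second property to an almost sure statement: if $\beta_n\ge0$ with $\sum_n\beta_n=\infty$, then $\sum_n\beta_n|g_n|^2=\infty$ almost surely, since $\mathbb E\big[\exp(-\sum_{n\le N}\beta_n|g_n|^2)\big]=\prod_{n\le N}(1+\beta_n)^{-1}\to0$ as $N\to\infty$ (because $\sum_n\log(1+\beta_n)=\infty$ whenever $\sum_n\beta_n=\infty$), so monotone convergence forces $\exp(-\sum_n\beta_n|g_n|^2)=0$ a.s. Applied with $\beta_n=n^{2\sigma+1}c_n^2$, this gives $\mu_s\big(\mathfrak h^{\sigma+1/2}_+\big)=0$ for all $\sigma>s$, hence property (1) for $\rho_s$, completing the construction. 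The only substantial external input is the G\'erard--Kappeler--Topalov linearization of the flow in Birkhoff coordinates recalled at the start; the delicate point is the calibration of $(c_n)$, which must decay just slowly enough that $(c_n g_n)_n$ lies almost surely in $\mathfrak h^{s+1/2}_+$ yet, almost surely, in no smaller space $\mathfrak h^{\sigma+1/2}_+$ with $\sigma>s$.
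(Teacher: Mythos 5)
Your proposal is correct, but it takes a genuinely different route at the key step. For the invariance in Birkhoff coordinates, the paper does not argue directly on the infinite-dimensional flow: it truncates to the finite-dimensional systems \eqref{Birk_N}, uses the rotation invariance of $\theta$ together with the Liouville theorem for the Hamiltonian flow $\Phi_N(t)$ (Lemma~\ref{div}), and then passes to the limit via the weak convergence statement of Lemma~\ref{lille} and the flow convergence \eqref{converge}. You instead exploit the exact action--angle structure: since the phases are i.i.d.\ uniform and independent of the actions, the measure disintegrates as (law of actions)$\,\otimes\,$(Haar on $\T^{\N}$), and the flow is, for each fixed action vector, a fixed translation on $\T^{\N}$, so invariance is immediate. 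This is a legitimately shorter argument for this specific flow (and would in fact also cover the paper's general radial laws $\theta$, not only Gaussians); the paper's finite-dimensional route is more robust and is reused in Section~4 for the renormalized flow, where your direct argument would need the renormalized phases. The second difference is in the measure-theoretic lemmas: the paper proves, for a general radial density $f$ with full support and finite second moment, that $\mu(h^\sigma)=0$ when $(\zeta^*_n)\notin h^\sigma$ and that full-measure sets are dense, by the Kolmogorov zero-one law, an elementary product estimate, and an independence/Markov argument (Lemma~\ref{lem1}); you restrict to Gaussian data, compute the Laplace transform $\prod_n(1+\beta_n)^{-1}$ explicitly to get the almost sure divergence, and invoke the standard support theorem for Gaussian measures with injective covariance, then transfer support through the homeomorphism $\phi^{-1}$. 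Your calibration $a_n=e^{-\sqrt{\log n}}/n$ plays the role of the paper's choice $\zeta^*_n=1/(n^{s+1}\log(n+1))$; both satisfy the needed summability/divergence dichotomy. The trade-off is that the paper obtains a large family of invariant measures (arbitrary admissible $\theta$ and weights $\zeta^*_n$), whereas your construction yields one Gaussian representative per $s$ --- which is all Theorem~\ref{thm2} requires --- with a more self-contained invariance proof.
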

In order to prove Theorem \ref{thm2}, we construct a large family of invariant measures for the Benjamin-Ono equation written in the Birkhoff coordinates, introduced in \cite{GK,GKP}.
In order to prove the invariance properties in the Birkhoff coordinates side, we simply exploit the Hamiltonian structure and some simple rotation invariance properties.  Then we use some basic properties of the Birkhoff map proved in  \cite{GK,GKP}. The analysis below gives more precise informations on the possible measures  $\rho_s$. However, many basic questions remain open, see Section~3 below. 
\\

The rest of the paper is organized as follows.  In the next section we study invariant measures for the Benjamin-Ono equation written in the Birkhoff coordinates. In Section~3 we complete the proof of  Theorem~\ref{thm2} and we make a conjecture concerning the nature of the obtained invariant measures. Finally, in the last section, we introduce a renormalized flow for 
 the Benjamin-Ono equation written in the Birkhoff coordinates for random data going beyond the analysis of  \cite{GK,GKP}. It is a challenging open problem to suitably extend the Birkhoff map of \cite{GK,GKP} to this setting. This would in particular allow to solve (after a renormalization) the Benjamin-Ono equation with white noise initial data.  See also Remark~\ref{rkrkrk} below.
\section{Analysis in  Birkhoff coordinates
}
\subsection{The flows}
By definition $\N=\{1,2,\cdots\}$. 
We denote by $h^s$ the set of sequences of complex numbers $\zeta=(\zeta_n)_{n\in \N}$ such that
\begin{equation}\label{norm}
\big(\sum_{n\in\N} |n|^{2s}\, |\zeta_n|^2\big)^{\frac 1 2}<\infty\,.
\end{equation}
We endow $h^s$ with the norm \eqref{norm} resulting from the natural scalar product.  
For $N\geq 1$, we denote by $E_N$ the space spanned by the first $N$ vectors in the canonical base of $h^s$ and by $\pi_N:h^s\rightarrow E_N$ the corresponding orthogonal projection.  Therefore $E_N=\{u\in h^s\colon u=\pi_N u \}$.
\\

When written in Birkhoff coordinates the Benjamin-Ono equation  in $H_0^{s-\frac 1 2}(\T)$  (cf. \cite{GK,GKP}) becomes the following equation in $h^s$ 
\begin{equation}\label{Birk}
\partial_t \zeta_n=i\big(
n^2-2\sum_{k=1}^\infty \min(n,k) |\zeta_k|^2
\big)\zeta_n, \quad n\geq 1. 
\end{equation}
For $s\geq 0$ the solutions of \eqref{Birk} are given by
\begin{equation}\label{Birk_sol}
\zeta_n(t)=\zeta_n(0)\, \exp\Big(it\big(n^2-2\sum_{k=1}^\infty \min(k,n)|\zeta_k(0)|^2\big)\Big)\,\,.
\end{equation}
We readily see that for $(\zeta_n(0))_{n\in\N}\in h^s$, $s\geq 0$ the expression \eqref{Birk_sol} is well defined while if  $(\zeta_n(0))_{n\in\N}\notin h^0$ the expression \eqref{Birk_sol} is not well defined.  We denote by $\Phi(t)$ the flow map of \eqref{Birk} defined on $h^s$, $s\geq 0$.
\\

For $N\geq 1$, we consider the following finite dimensional truncation of \eqref{Birk}, posed on $E_N$,
\begin{equation}\label{Birk_N}
\partial_t \zeta_n=i\big(n^2-2\sum_{k=1}^N \min(n,k) |\zeta_k|^2\big)\zeta_n,\quad 1\leq n\leq N, \,\, \zeta_n(t)=0, n>N.
\end{equation}
If $\zeta_n=\xi_n+i\eta_n$ then \eqref{Birk_N} becomes 
\begin{equation}\label{Hamilton}
\partial_{t}\xi_n=\frac{\partial H}{\partial {\eta_n}},\quad \partial_{t}\eta_n=-\frac{\partial H}{\partial {\xi_n}},\,1\leq n\leq N,\, \zeta_n(t)=0,n>N,
\end{equation}
where the Hamiltonian $H$ is given by
$$
H(\xi_1,\cdots,\xi_N,\eta_1,\cdots,\eta_N)=-\frac{1}{2}\sum_{k=1}^N  k^2 (\xi_k^2+\eta_k^2)+\frac{1}{2}
\sum_{k=1}^N \Big(\sum_{k_1=k}^N (\xi_{k_1}^2+\eta_{k_1}^2)\Big)^2\,.
$$
The solutions of \eqref{Birk_N} are given by
\begin{equation}\label{Birk_sol_N}
\zeta_n(t)=\zeta_n(0)\, \exp\Big(it\big(n^2-2\sum_{k=1}^N \min(k,n)|\zeta_k(0)|^2\big)\Big)\,\,,\, 1\leq n\leq N, \,\, \zeta_n(t)=0, n>N.
\end{equation}
Let us denote by $\Phi_N(t)$ the flow map of \eqref{Birk_N} defined on $E_N$. We have that  $\Phi_N(t)$ is the restriction of $\Phi(t)$ to $E_N$.
Using \eqref{Birk_sol}, \eqref{Birk_sol_N} and the dominated convergence,  we obtain that  
\begin{equation}\label{converge}
\lim_{N\rightarrow \infty}\, \|\Phi(t)(\zeta)-\Phi_N(t)(\pi_N \zeta)\|_{h^s}=0, \quad \forall\, \zeta\in h^s,\, s\geq 0.
\end{equation}
\subsection{The measures}
Let $\theta$ be a probability measure on $\C\simeq \R^2$,  defined by
\begin{equation}\label{theta}
d\theta=f(x,y)dxdy\,,
\end{equation}
where $f$ is supposed to be a {\it radial} function, i.e.  $f(x,y)$ only depends on $x^2+y^2$.
Therefore $\theta$ is invariant under the action of the rotations of $\R^2$, a property which will be crucially used in the sequel. 
We suppose that the support of $f$ is $\R^2$.  Finally, we also suppose that 
\begin{equation}\label{moment2}
\int_{\R^2}(x^2+y^2)f(x,y)dxdy<\infty\,.
\end{equation}
The most typical example of an admissible $f$ is the gaussian 
$$
f(x,y)=\frac{1}{\pi}e^{-(x^2+y^2)}\,.
$$
Let $s\geq 0$ and let us fix a sequence $(\zeta^{*}_n)_{n\in\N}\in h^s$.
Let $(g_n(\omega))_{n\in\N}$ be a sequence of independent, identically distributed complex valued random variables with law $\theta$.
Suppose that $g_n$ are defined on a probability space $(\Omega,{\mathcal F},p)$.  Let $\mu$ be the probability measure on $h^s$, induced by the map
\begin{equation}\label{mapp}
\omega\longmapsto \big(\zeta^{*}_n\, g_{n}(\omega)\big)_{n\in\N}.
\end{equation}
More precisely, thanks to the assumption \eqref{moment2}, we have that 
$$
\Big(\pi_N\big(\zeta^{*}_n\, g_{n}(\omega)\big)_{n\in\N}\Big)_{N\in\N}
$$
is a Cauchy sequence in $L^2(\Omega;h^s)$
and therefore \eqref{mapp} defines a measurable map from  $(\Omega,{\mathcal F})$ to $(h^s,{\mathcal B})$, where ${\mathcal B}$ is the Borel $\sigma$-algebra of $h^s$.
The measure $\mu$ depends on the sequence  $(\zeta^{*}_n)_{n\in\N}$ and the choice of the measure $\theta$. This (important) dependence will not be explicitly mentioned. 
\\

We also define $\mu_N$ as the probability measure on the finite dimensional space $E_N$, induced by the map
\begin{equation*}
\omega\longmapsto \big(\zeta^{*}_1\, g_{1}(\omega),\cdots,\zeta^{*}_N\, g_{N}(\omega),0,0,\cdots \big).
\end{equation*}
Observe that if $A$ is a Borel set of $h^s$ and $\chi_{A}$ the characteristic function of $A$ then
$$
\int_{E_N} \chi_{A}(\zeta)\mu_N(d\zeta)=\mu_{N}(A\cap E_N)\,.
$$
Indeed, it suffices to use that $ \chi_{A}(\zeta) \chi_{E_N}(\zeta)= \chi_{A\cap E_N}(\zeta)$.
Similarly, we define $\mu^N$ as the probability measure on the orthogonal complementary of $E_N$ in $h^s$, induced by the map
\begin{equation*}
\omega\longmapsto \big(0,\cdots,0,\zeta^{*}_{N+1}\, g_{N+1}(\omega),\zeta^{*}_{N+2}\, g_{N+2}(\omega),\cdots \big).
\end{equation*}
We have the following property of the measure $\mu$.
\begin{lemma}\label{lem1}
Let $\mu$ be the measure defined from a sequence $(\zeta^{*}_n)_{n\in\N}\in h^s$ such that $\zeta^{*}_n\neq 0$ for every $n\in\N$. 
Then every set of full $\mu$ measure is dense in $h^s$. 
Moreover, if for some $\sigma>s$,  $(\zeta^{*}_n)_{n\in\N}\notin h^\sigma$ then $\mu(h^\sigma)=0$.  
\end{lemma}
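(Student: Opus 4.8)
The plan is to prove the two assertions separately, both by exploiting the product structure of $\mu$ coming from the independence of the $g_n$ together with the hypothesis that $\theta$ has full support $\R^2$.

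\emph{Density of full-measure sets.} Let $S\subset h^s$ be a Borel set with $\mu(S)=1$; I must show $S$ is dense. Fix $\zeta=(\zeta_n)\in h^s$ and $\varepsilon>0$; I want a point of $S$ within $\varepsilon$ of $\zeta$. First choose $N$ so large that the tail $\big(\sum_{n>N}|n|^{2s}|\zeta_n|^2\big)^{1/2}<\varepsilon/3$ and also $\big(\sum_{n>N}|n|^{2s}|\zeta^*_n|^2 E|g_n|^2\big)^{1/2}$ is small on a set of large probability (possible since $(\zeta^*_n)\in h^s$ and \eqref{moment2} holds). Then I consider the open cylinder set $U=\{\omega\in\Omega : |\zeta^*_ng_n(\omega)-\zeta_n|<\delta_n,\ 1\le n\le N\}$ for suitably small $\delta_n>0$ (chosen so the first-$N$-coordinate error is $<\varepsilon/3$, using $\zeta^*_n\neq0$ so that $\zeta_n/\zeta^*_n$ is a genuine target for $g_n$); since each $g_n$ has law $\theta$ with full support, $p(|\zeta^*_ng_n-\zeta_n|<\delta_n)>0$, and by independence $p(U)>0$. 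Intersecting $U$ with the large-probability tail-smallness event still gives positive probability, hence meets the preimage of $S$ (which has probability $1$). Any $\omega$ in that intersection produces a point $(\zeta^*_ng_n(\omega))\in S$ within $\varepsilon/3+\varepsilon/3+\varepsilon/3=\varepsilon$ of $\zeta$. Hence $S$ is dense.

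\emph{The regularity statement.} Suppose $(\zeta^*_n)\notin h^\sigma$, i.e. $\sum_n |n|^{2\sigma}|\zeta^*_n|^2=\infty$. I claim $\mu(h^\sigma)=0$. Under $\mu$, the random element is $(\zeta^*_ng_n)$, so $\|(\zeta^*_ng_n)\|_{h^\sigma}^2=\sum_n |n|^{2\sigma}|\zeta^*_n|^2|g_n|^2$, a sum of independent nonnegative random variables $X_n=|n|^{2\sigma}|\zeta^*_n|^2|g_n|^2$. Since $\theta$ has full support, $|g_n|^2$ is not a.s. zero; pick $c>0$ and $q>0$ with $p(|g_n|^2\ge c)=q$ for all $n$ (the $g_n$ are i.i.d.). Then $\sum_n p(X_n\ge c|n|^{2\sigma}|\zeta^*_n|^2)=\sum_n q=\infty$ if infinitely many $\zeta^*_n\neq 0$ (which holds here since otherwise the $h^\sigma$-norm would be finite); but I want divergence of the series itself, so instead argue: by Kolmogorov's three-series theorem (or directly), a sum of independent nonnegative terms converges a.s. iff $\sum_n E[\min(X_n,1)]<\infty$. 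Here $E[\min(X_n,1)]\ge E[\min(X_n,1)\mathbf{1}_{|g_n|^2\ge c}]\ge \min(c|n|^{2\sigma}|\zeta^*_n|^2,1)\cdot q$, and since $|n|^{2\sigma}|\zeta^*_n|^2$ does not tend to $0$ along a subsequence fast enough—more precisely $\sum_n |n|^{2\sigma}|\zeta^*_n|^2=\infty$ forces $\sum_n \min(c|n|^{2\sigma}|\zeta^*_n|^2,1)=\infty$—the series diverges a.s. Therefore $\|(\zeta^*_ng_n)\|_{h^\sigma}=\infty$ a.s., i.e. $\mu(h^\sigma)=0$.

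\emph{Main obstacle.} The density argument is the more delicate one: the subtlety is that perturbing only finitely many coordinates towards $\zeta$ leaves the infinite tail of $(\zeta^*_ng_n(\omega))$, which is random and only $L^2$-small in expectation, so one must combine the positive-probability cylinder event with a high-probability tail-control event and check their intersection is still nonempty after intersecting with a probability-one set; keeping the three error contributions genuinely under control simultaneously is where care is needed. The regularity part is essentially a Borel–Cantelli / three-series computation and should be routine once the correct truncation $\min(X_n,1)$ is used.
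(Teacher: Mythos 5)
Your proposal is correct, and it is worth comparing the two halves separately. For the density statement you follow essentially the same route as the paper: approximate the target by its first $N$ coordinates, use $\zeta^*_n\neq 0$ together with the full support of $\theta$ to give the finite-dimensional cylinder event positive probability, and control the random tail via \eqref{moment2} and Markov--Chebyshev; the paper phrases this as $\mu(\|\zeta-\alpha\|_{h^s}<\varepsilon)>0$ for every ball, which is the same computation. One small point to state explicitly: the cylinder event depends only on $g_1,\dots,g_N$ and the tail-smallness event only on $(g_n)_{n>N}$, so their intersection has positive probability by independence (merely saying ``positive probability intersected with large probability'' is not by itself enough), and the displayed quantity $\big(\sum_{n>N}|n|^{2s}|\zeta^*_n|^2 E|g_n|^2\big)^{1/2}$ is deterministic --- what you mean is that the random tail is small with large probability by Markov, which is the intended and correct step. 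For the regularity statement your route genuinely differs from the paper's: you apply the standard criterion (via the three-series theorem) that a series of independent nonnegative variables $X_n=|n|^{2\sigma}|\zeta^*_n|^2|g_n|^2$ converges a.s.\ if and only if $\sum_n E[\min(X_n,1)]<\infty$, and bound $E[\min(X_n,1)]\geq q\,\min(c\,|n|^{2\sigma}|\zeta^*_n|^2,1)$ using $p(|g_n|^2\geq c)=q>0$; since $\sum_n|n|^{2\sigma}|\zeta^*_n|^2=\infty$ forces divergence of the truncated series, the $h^\sigma$-norm is a.s.\ infinite. The paper instead follows Burq--Tzvetkov: Kolmogorov's zero-one law reduces to excluding $\mu(h^\sigma)=1$, and a contradiction is reached by showing $\int e^{-\|\pi_N\zeta\|_{h^\sigma}^2}d\mu$ factorizes into a product bounded by $\prod_n\big(1-(1-\theta)(1-e^{-|\zeta^*_n|^2 n^{2\sigma}})\big)\to 0$. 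Both are sound; your argument is more directly probabilistic and subsumes the zero-one dichotomy inside the three-series theorem, while the paper's exponential-moment trick is self-contained (no appeal to the three-series theorem) and is the template used in the random-data literature it cites. The abandoned Borel--Cantelli count in the middle of your second paragraph is harmless since the three-series argument that replaces it is complete.
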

\begin{proof}
Following \cite[Lemma~B1]{BT1}, we first show that  if for some $\sigma>s$,  $(\zeta^{*}_n)_{n\in\N}\notin h^\sigma$ then $\mu(h^\sigma)=0$.  
We have that the event 
$$
\{
\zeta\in h^s\,:\, \|\zeta\|_{h^\sigma}<\infty
\}
$$
belongs to the asymptotic $\sigma$-algebra obtained from suitable independent $\sigma$-algebras because the property $\|\zeta\|_{h^\sigma}<\infty$ depends only on $(1-\Pi_N)\zeta$ for every $N\in\N$ (see e.g. \cite{Stein}).
Therefore by the Kolmogorov zero-one law, we have that  $\mu(h^\sigma)=0$ or $\mu(h^\sigma)=1$.
We suppose that   $\mu(h^\sigma)=1$ and we look for a contradiction.  
If $\mu(h^\sigma)=1$ then $\|\zeta\|_{h^\sigma}<\infty$,  $\mu$ almost surely and by the dominated convergence 
\begin{equation}\label{zero-one}
\lim_{N\rightarrow\infty} \, 
\int_{h^s}\,e^{-\|\pi_N \zeta\|_{h^\sigma}^2}\,\mu(d\zeta)=\int_{h^s}\, e^{-\|\zeta\|_{h^\sigma}^2}\,\mu(d\zeta)>0\,.
\end{equation}
We will show that 
\begin{equation}\label{diveregence}
\lim_{N\rightarrow\infty} \, \int_{h^s}\, e^{-\|\pi_N \zeta\|_{h^\sigma}^2}\,\mu(d\zeta)=0
\end{equation}
which will be in a contradiction with \eqref{zero-one}.  Using the independence, we can write 
\begin{equation}\label{diveregence_bis}
\int_{h^s}\, e^{-\|\pi_N \zeta\|_{h^\sigma}^2}\,\mu(d\zeta)=\prod_{n=1}^N \int_{\R^2}e^{- |\zeta_n^\star|^{2}(x^2+y^2)n^{2\sigma}}f(x,y)dxdy\,.
\end{equation}
Now,  if we set 
$$
\theta:=\int_{x^2+y^2\leq 1}\, f(x,y)dxdy\in (0,1)
$$
then we can write 
\begin{multline*}
\int_{\R^2}e^{- |\zeta_n^\star|^{2}(x^2+y^2)n^{2\sigma}}f(x,y) dxdy
\leq 
\theta
+
\int_{x^2+y^2> 1}e^{- |\zeta_n^\star|^{2}(x^2+y^2)n^{2\sigma}}f(x,y)dxdy
\\
\leq 
\theta+e^{- |\zeta_n^\star|^{2}n^{2\sigma}}(1-\theta)
=1-(1-\theta)(1-e^{- |\zeta_n^\star|^{2} n^{2\sigma}})\,.
\end{multline*}
Now, we observe that 
$$
\lim_{N\rightarrow\infty} \, \sum_{n=1}^N(1-e^{- |\zeta_n^\star|^{2}n^{2\sigma}})=\infty
$$
because 
$$
\lim_{N\rightarrow\infty} \, 
\sum_{n=1}^N |\zeta_n^\star|^{2}\, n^{2\sigma} =\infty
$$
by assumption. Therefore, we have that \eqref{diveregence} holds because the product defined in \eqref{diveregence_bis} diverges to zero.
\\

Next, we show that every set of full $\mu$ measure is dense in $h^s$. 
For that purpose, we follow  \cite[Proposition~1.2]{BT2}.
We need to show that for every $\alpha=(\alpha_n)_{n\in\N}\in h^s$ and every $\varepsilon>0$
$$
\mu(\zeta\,:\, \|\zeta-\alpha\|_{h^s}<\varepsilon)>0\,.
$$
Using the independence and the triangle inequality, we can write 
$$
\mu(\zeta\,:\, \|\zeta-\alpha\|_{h^s}<\varepsilon)\geq \mu\big(\zeta\,:\, \|\pi_N(\zeta-\alpha)\|_{h^s}<\varepsilon/2\big)\times
\mu\big(\zeta\,:\, \|\pi_N^\perp(\zeta-\alpha)\|_{h^s}<\varepsilon/2\big). 
$$
Using that $\zeta^{*}_n\neq 0$ for every $n\in\N$ and using the support property of the function $f$ defining the measure $\theta$ in \eqref{theta}, we infer that for every $N\geq 1$,
$$
\mu(\zeta\,:\, \|\pi_N(\zeta-\alpha)\|_{h^s}<\varepsilon)>0\,.
$$
Therefore our aim is to bound from below 
$
\mu(\zeta\,:\, \|\pi_N^\perp(\zeta-\alpha)\|_{h^s}<\varepsilon/2),
$
for $N\gg 1$. For that purpose, we can write that if $ \|\pi_N^\perp \alpha\|_{h^s}<\varepsilon/4$ then
$$
\mu(\zeta\,:\, \|\pi_N^\perp(\zeta-\alpha)\|_{h^s}<\varepsilon/2)
\geq
\mu\big(\zeta\,:\, \|\pi_N^\perp \zeta\|_{h^s}<\varepsilon/4
\big).
$$
Recall that $\alpha\in h^s$ is fixed. Hence  for $N\gg 1$,  we have $\|\pi_N^\perp \alpha\|_{h^s}<\varepsilon/4$ and therefore for $N\gg 1$,
 $$
\mu(\zeta\,:\, \|\pi_N^\perp(\zeta-\alpha)\|_{h^s}<\varepsilon/2)
\geq
\mu\big(\zeta\,:\, \|\pi_N^\perp \zeta\|_{h^s}<\varepsilon/4
\big)
=
1-
\mu\big(\zeta\,:\, \|\pi_N^\perp \zeta\|_{h^s}\geq \varepsilon/4
\big).
$$
By the Markov inequality and  thanks to \eqref{moment2}, we can write 
$$
\mu\big(\zeta\,:\, \|\pi_N^\perp \zeta\|_{h^s}\geq \varepsilon/4
\big)\leq
\frac{4}{\varepsilon}
\| \|\pi_N^\perp \zeta\|_{h^s}\|_{L^2(d\mu(\zeta))}^2
\leq \frac{C}{\varepsilon}
\sum_{n>N}|\zeta_n^{*}|^2 n^{2s}\,.
$$
Therefore for $N\gg 1$,
$$
\mu\big(\zeta\,:\, \|\pi_N^\perp \zeta\|_{h^s}\geq \varepsilon/4
\big)<\frac{1}{2}\,.
$$
In summary, for $N\gg 1$
$$
\mu(\zeta\,:\, \|\pi_N^\perp(\zeta-\alpha)\|_{h^s}<\varepsilon/2)>\frac{1}{2}\,.
$$
This completes the proof of Lemma~\ref{lem1}. 
\end{proof}
\subsection{Invariance of the measures} 
Using \eqref{converge}, by the Lebesgue dominated convergence theorem, for every $F\in C_b(h^s;\R)$,
\begin{equation}\label{leoleo} 
\lim_{N\rightarrow \infty}\, \int_{h^s} F(\Phi_N(t)(\pi_N \zeta))\mu(d\zeta)=\int_{h^s} F(\Phi(t)( \zeta))\mu(d\zeta)\,.
\end{equation}
We have the following basic lemma. 
\begin{lemma}\label{lille}
For every $F\in C_b(h^s;\R)$,
\begin{equation}\label{FF}
\int_{h^s} F(\zeta) \mu(d\zeta)=\lim_{N\rightarrow \infty}\int_{E_N}F(\zeta)\mu_N(d\zeta)\,.
\end{equation}
\end{lemma}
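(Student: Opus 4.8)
The plan is to recognize the finite-dimensional integral on the right-hand side of \eqref{FF} as the integral against $\mu$ of the truncated test function $F\circ\pi_N$, and then to pass to the limit by dominated convergence.

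First I would observe that $\mu_N$ is nothing but the push-forward of $\mu$ under the projection $\pi_N$. Indeed, by construction $\mu$ is the law of the $h^s$-valued random variable $\omega\mapsto(\zeta^{*}_n\,g_n(\omega))_{n\in\N}$, so $\pi_N$ applied to it has law $(\pi_N)_{*}\mu$; on the other hand
$$
\pi_N\big(\zeta^{*}_n\,g_n(\omega)\big)_{n\in\N}=\big(\zeta^{*}_1 g_1(\omega),\dots,\zeta^{*}_N g_N(\omega),0,0,\dots\big),
$$
which is exactly the random variable whose law defines $\mu_N$. Equivalently, one may check the identity $\int_{E_N}\chi_A(\zeta)\,\mu_N(d\zeta)=\int_{h^s}\chi_A(\pi_N\zeta)\,\mu(d\zeta)$ on characteristic functions of Borel sets $A$ (this is essentially the computation recorded just before the statement of the lemma) and extend it to $F\in C_b(h^s;\R)$ by the standard approximation of bounded measurable functions by simple functions. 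In either case the change of variables formula gives
$$
\int_{E_N}F(\zeta)\,\mu_N(d\zeta)=\int_{h^s}F(\pi_N\zeta)\,\mu(d\zeta)\,.
$$

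Next I would use that $\pi_N\zeta\to\zeta$ in $h^s$ for \emph{every} $\zeta\in h^s$, since $\|\zeta-\pi_N\zeta\|_{h^s}^2=\sum_{n>N}n^{2s}|\zeta_n|^2\to 0$ as the tail of a convergent series. Because $F$ is continuous on $h^s$, this yields $F(\pi_N\zeta)\to F(\zeta)$ pointwise in $\zeta$; because $F$ is bounded and $\mu$ is a probability measure, the constant $\|F\|_{L^\infty(h^s)}$ is an integrable majorant. The dominated convergence theorem then gives
$$
\lim_{N\to\infty}\int_{h^s}F(\pi_N\zeta)\,\mu(d\zeta)=\int_{h^s}F(\zeta)\,\mu(d\zeta)\,,
$$
which, combined with the previous display, is precisely \eqref{FF}.

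There is no genuine obstacle here: the only point requiring a little care is the measure-theoretic identification $(\pi_N)_{*}\mu=\mu_N$ together with the attendant change of variables, which is why I would phrase it through the push-forward of the explicit random variable \eqref{mapp} rather than by manipulating the measures $\mu$ and $\mu_N$ directly. Everything else reduces to the elementary convergence $\pi_N\zeta\to\zeta$ in $h^s$ and a single application of dominated convergence against the finite measure $\mu$.
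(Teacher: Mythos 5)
Your proof is correct, but it takes a genuinely different and more economical route than the paper. You exploit the exact identity $\mu_N=(\pi_N)_{*}\mu$ (true here because $\mu$ is the law of $(\zeta^{*}_n g_n)_{n}$ and $\mu_N$ is by definition the law of its image under $\pi_N$), rewrite $\int_{E_N}F\,d\mu_N=\int_{h^s}F(\pi_N\zeta)\,\mu(d\zeta)$, and conclude by dominated convergence from $\pi_N\zeta\to\zeta$ in $h^s$ and the bound $|F|\le \|F\|_{L^\infty}$. The paper instead proves a Portmanteau-type weak convergence $\mu_N\Rightarrow\mu$, following \cite{Tz_f}: it establishes $\liminf_N\mu_N(U)\ge\mu(U)$ for open $U$ via the inclusion $U\subset\liminf_N U_N$ and Fatou (using the same Fubini/push-forward identity you use, but only on indicator functions), deduces the complementary bound for closed sets, gets convergence on sets with $\mu$-null boundary, and finishes with a layer-cake argument over the level sets $A_\lambda=\{F\ge\lambda\}$, noting that only countably many $\lambda$ can have $\mu(\partial A_\lambda)>0$. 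For the specific measures of this lemma your shortcut is fully rigorous and arguably preferable; what the paper's heavier scheme buys is a template that survives in situations where $\mu_N$ is not exactly the marginal $(\pi_N)_{*}\mu$ (e.g.\ truncated measures with $N$-dependent densities, as in the Gibbs-measure setting of \cite{Tz_f}), where one only has weak convergence rather than an exact change of variables.
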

\begin{proof}
We follow  \cite[Theorem~1.2]{Tz_f}. 
Let $U$ be an open of $h^s$.  Set
$
U_N:=\{\zeta\in h^s\,:\, \pi_N(\zeta)\in U\}.
$
Then 
\begin{equation}\label{subset}
U\subset \liminf_{N}(U_N),
\end{equation}
where 
$$
\liminf_{N}(U_N):=\bigcup_{N=1}^\infty \bigcap_{N_1=N}^\infty U_{N_1}\,.
$$
Indeed for every $u\in U$, we have that $\pi_N(u)$ converges to $u$ in $h^s$ and hence since $U$ is an open set there is $N_0$ such that for every $N\geq N_0$, we have $\pi_N(u)\in U$, i.e. $u\in U_N$. This is precisely the condition to assure that $u\in \liminf_{N}(U_N)$. 
Denote by $\chi_{U}$ the characteristic function of $U$.
Then using the Fubini theorem, we can write 
$$
\int_{h^s}\chi_{U_N}(\zeta)\mu(d\zeta)  = \int_{E_N}
\Big(
\int_{E_N^\perp}\chi_{U_N}(\zeta)\mu^N(d\zeta^N)\Big)
\mu_N(d\zeta_N)\,,
$$
where $\zeta=(\zeta_N,\zeta^N)$. We have that 
$
\chi_{U_N}(\zeta)=\chi_{U}(\pi_N\zeta)
$ 
and therefore we get
$$
\int_{h^s}\chi_{U_N}(\zeta)\mu(d\zeta)  =  \int_{E_N}\chi_{U}(\zeta)\mu_N(d\zeta)\,.
$$
Using the Fatou lemma and \eqref{subset}, we obtain that 
\begin{equation}\label{open_15_04}
\liminf_{N\rightarrow\infty} \int_{E_N}\chi_{U}(\zeta)\mu_N(d\zeta)\geq \int_{h^s}\chi_{U}(\zeta)\mu(d\zeta)\,.
\end{equation}
By passing to complementary sets, we obtain that for every closed set $F$, 
\begin{equation}\label{closed_15_04}
\limsup_{N\rightarrow\infty} \int_{E_N}\chi_{F}(\zeta)\mu_N(d\zeta)\leq \int_{h^s}\chi_{F}(\zeta)\mu(d\zeta)\,.
\end{equation}
As a consequence, if $A$ is a measurable set such that $\mu(\partial A)=0$ then, we have  
\begin{equation}\label{limm}
\lim_{N\rightarrow\infty} \int_{E_N}\chi_{A}(\zeta)\mu_N(d\zeta)=\int_{h^s}\chi_{A}(\zeta)\mu(d\zeta)=\mu(A) \,.
\end{equation}
Indeed,  using \eqref{open_15_04} and \eqref{closed_15_04}, we can write
\begin{multline*}
 \int_{h^s}\chi_{A}(\zeta)\mu(d\zeta)=  \int_{h^s}\chi_{\mathring{A}}(\zeta)\mu(d\zeta)\leq 
 \liminf_{N\rightarrow\infty} \int_{E_N}\chi_{\mathring{A}}(\zeta)\mu_N(d\zeta)
 \leq 
 \\
 \liminf_{N\rightarrow\infty} \int_{E_N}\chi_{A}(\zeta)\mu_N(d\zeta)
 \leq 
  \limsup_{N\rightarrow\infty} \int_{E_N}\chi_{A}(\zeta)\mu_N(d\zeta)
\leq
\\
 \limsup_{N\rightarrow\infty} \int_{E_N}\chi_{\overline{A}}(\zeta)\mu_N(d\zeta)\leq
  \int_{h^s}\chi_{\overline{A}}(\zeta)\mu(d\zeta)=
   \int_{h^s}\chi_{A}(\zeta)\mu(d\zeta)\,.
\end{multline*}
Therefore we have \eqref{limm}.
Let us now turn to the proof of \eqref{FF}.
By writing 
$$
F=\frac{1}{2}(F+|F|)+\frac{1}{2}(F-|F|),
$$
we conclude that we can suppose that in \eqref{FF}, we have that $F\geq 0$. Let $R>0$ be such that $0\leq F(\zeta)\leq R$. 
Therefore, we can write 
$$
\int_{h^s} F(\zeta) \mu(d\zeta)=\int_{0}^R \mu(A_{\lambda})d\lambda\,,
$$
where 
$
A_{\lambda}=\{\zeta \in h^s \, :\, F(\zeta)\geq \lambda \}. 
$
Since $F$ is continuous, we have that $A_{\lambda}$ is closed and 
$
\{\zeta \in h^s \, :\, F(\zeta)> \lambda \}
$
is open. Therefore 
$
\partial A_{\lambda}\subset\{\zeta \in h^s \, :\, F(\zeta)= \lambda \}. 
$ 
In particular, if $\lambda_1\neq \lambda_2$ then 
$
\partial A_{\lambda_1}
$ 
and 
$
\partial A_{\lambda_2}
$ 
are disjoint.  We claim that 
\begin{equation}\label{bord}
\mu(\partial A_{\lambda})=0, \quad \lambda-{\rm almost~surely}\,.
\end{equation}
Indeed, since $\mu$ is a probability measure, we have that for every $\varepsilon>0$ there can only be a finite number of $\lambda\in [0,R]$ such that 
$\mu(\partial A_{\lambda})>\varepsilon$ (not more than $1/\varepsilon$).  Taking a union on a sequence of  $\varepsilon$ tending to $0$, 
we obtain that there may be only a countable set of $\lambda$'s such that  $\mu(\partial A_{\lambda})>0$. This implies \eqref{bord}.  
Using  \eqref{limm}, \eqref{bord} and the dominated convergence, we obtain that  
$$
\int_{h^s} F(\zeta) \mu(d\zeta)=\int_{0}^R \mu(A_{\lambda})d\lambda
=
\lim_{N\rightarrow\infty}
\int_{0}^R 
\int_{E_N}
\chi_{A_{\lambda}}(\zeta)\mu_N(d\zeta)d\lambda\,.
$$
Now, we observe that 
$$
\int_{E_N}\chi_{A_{\lambda}}(\zeta)\mu_N(d\zeta)=\mu_N(A_\lambda^N),
$$
where 
$
A^N_{\lambda}=\{\zeta \in E_N \, :\, F(\zeta)\geq \lambda \}
$
and therefore 
$$
\int_{0}^R \int_{E_N}\chi_{A_{\lambda}}(\zeta)\mu_N(d\zeta)d\lambda=\int_{E_N}F(\zeta) \mu_N (d\zeta)\,.
$$
This completes the proof of Lemma~\ref{lille}. 
\end{proof}
Using the invariance under rotations of the density of the measure $\theta$ defined in \eqref{theta} and the Liouville theorem for the Hamiltonian flow $\Phi_N(t)$ on $E_N$ (see \eqref{Hamilton}), one has the following statement.
\begin{lemma}\label{div}
For every $F\in C_b(h^s;\R)$,
$$
\int_{E_N} F(\Phi_N(t)(\zeta))\mu_N(d\zeta)=\int_{E_N} F(\zeta) \mu_N(d\zeta)\,.
$$
\end{lemma}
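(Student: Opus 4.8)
The plan is to realise $\mu_N$ as an absolutely continuous measure on $E_N\simeq\C^N\simeq\R^{2N}$ whose density is preserved by $\Phi_N(t)$, while Lebesgue measure itself is preserved by Liouville's theorem; combining the two invariances gives the claim. First I would write down the density of $\mu_N$. Since the $g_n$ are i.i.d.\ with radial law $\theta=f\,dxdy$ and multiplication by the complex number $\zeta_n^{*}$ is the composition of a dilation by $|\zeta_n^{*}|$ and a rotation, the law of $\zeta_n^{*}g_n$ on $\C$ is again radial; hence, by independence,
$$
\mu_N(d\zeta)=\rho_N(\zeta_1,\dots,\zeta_N)\,d\zeta,\qquad
\rho_N(\zeta_1,\dots,\zeta_N)=\prod_{n=1}^{N}\frac{1}{|\zeta_n^{*}|^{2}}\,f\Big(\frac{\zeta_n}{|\zeta_n^{*}|}\Big),
$$
where $d\zeta$ is Lebesgue measure on $E_N$, and the crucial point is that $\rho_N$ depends only on $|\zeta_1|,\dots,|\zeta_N|$. (If some $\zeta_n^{*}$ vanish, those coordinates are frozen at $0$ both by $\mu_N$ and by $\Phi_N(t)$, and one simply runs the argument on the coordinate subspace spanned by the indices with $\zeta_n^{*}\neq 0$.)

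Next I would record the two invariance facts. On the one hand, the explicit solution formula \eqref{Birk_sol_N} shows that $\Phi_N(t)$ multiplies each component $\zeta_n$ by a unimodular factor, so $|\zeta_n|$ is conserved along the flow; consequently $\rho_N\circ\Phi_N(t)=\rho_N$. On the other hand, \eqref{Hamilton} exhibits $\Phi_N(t)$ as the flow of a smooth Hamiltonian vector field on $E_N$, which is divergence free, so by Liouville's theorem $\Phi_N(t)$ preserves Lebesgue measure $d\zeta$; in particular $\Phi_N(t)$ is a diffeomorphism of $E_N$ with $\det D\Phi_N(t)\equiv 1$.

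Finally I would combine the two. For $F\in C_b(h^s;\R)$, using $\rho_N\circ\Phi_N(t)=\rho_N$,
$$
\int_{E_N}F(\Phi_N(t)(\zeta))\,\mu_N(d\zeta)=\int_{E_N}F(\Phi_N(t)(\zeta))\,\rho_N(\Phi_N(t)(\zeta))\,d\zeta,
$$
and then the change of variables $\eta=\Phi_N(t)(\zeta)$, whose Jacobian is identically $1$, turns the right-hand side into $\int_{E_N}F(\eta)\rho_N(\eta)\,d\eta=\int_{E_N}F(\zeta)\,\mu_N(d\zeta)$. All manipulations are legitimate because $F$ is bounded and $\rho_N\in L^1(E_N)$. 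There is no genuine obstacle here; the only point demanding a little care is the bookkeeping showing that the possibly complex weights $\zeta_n^{*}$ do not destroy the radiality of $\rho_N$, which is precisely where the rotation invariance of $\theta$ enters.
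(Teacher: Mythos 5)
Your proof is correct and follows essentially the same route as the paper: rotation invariance of the (radial) density of $\mu_N$ in each coordinate combined with Liouville's theorem for the Hamiltonian flow $\Phi_N(t)$, which preserves Lebesgue measure on $E_N$, so the change of variables $\eta=\Phi_N(t)(\zeta)$ gives the claim. The only difference is cosmetic: you write out the density $\rho_N$ with the weights $\zeta_n^{*}$ explicitly (and note how to handle vanishing $\zeta_n^{*}$), whereas the paper runs the same radiality-plus-volume-preservation argument directly on the product density $\prod_n f(\zeta_n)$.
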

\begin{proof}
Set 
$$
\tilde{F}(\zeta_1,\cdots,\zeta_N):= F(\zeta_1,\cdots,\zeta_N,0,0, \cdots)\,.
$$
We have that 
$$
\pi_N\Phi_N(t)(\zeta)=\Big(\zeta_n e^{it\beta_{N,n}(\zeta)}\Big)_{1\leq n\leq N},\quad \zeta\in E_N\,,
$$
where
$$
\beta_{N,n}(\zeta)=n^2-2\sum_{k=1}^N \min(n,k) |\zeta_k|^2.
$$
We are reduced to compute 
\begin{equation}\label{L1}
\int_{\C^N}
\tilde{F}\big(\zeta_1,\cdots,\zeta_N\big) \big(\prod_{n=1}^N f(\zeta_n)\big) d\zeta_1 \cdots d\zeta_N\,.
\end{equation}
Thanks to our assumption on $f$,
$$
f(\zeta_n)=f\Big(\zeta_n e^{it\beta_{N,n}(\zeta_1,\cdots,\zeta_N)}\Big),\quad 1\leq n\leq N\,.
$$
Moreover, thanks to the Hamiltonian structure \eqref{Hamilton},  using the Liouville theorem, we have that the volume element  $d\zeta_1 \cdots d\zeta_N$ is invariant under the map
$$
\zeta_n\longmapsto \zeta_n e^{it\beta_{N,n}(\zeta_1,\cdots,\zeta_N)}\,,\quad 1\leq n\leq N\,.
$$
Therefore \eqref{L1} equals 
\begin{equation*}
\int_{\C^N}
\tilde{F}\big(\zeta_1\, e^{it\beta_{N,1}(\zeta_1,\cdots,\zeta_N)} ,\cdots,\zeta_N\,e^{it\beta_{N,N}(\zeta_1,\cdots,\zeta_N)}   \big) \big(\prod_{n=1}^N f(\zeta_n)\big) d\zeta_1 \cdots d\zeta_N\,.
\end{equation*}
This completes the proof of Lemma~\ref{div}.
\end{proof}
Using the Fubini theorem 
$$
\int_{E_N} F(\Phi_N(t)(\zeta))\mu_N(d\zeta)=
\int_{E_N} F(\Phi_N(t)(\pi_N \zeta)) \mu_N(d\zeta)=
\int_{h^s} F(\Phi_N(t)(\pi_N \zeta))\mu(d\zeta)\,.
$$
Therefore, thanks to  \eqref{leoleo} and Lemma~\ref{div}, 
$$
\int_{h^s} F(\Phi(t)( \zeta))\mu(d\zeta)=\lim_{N\rightarrow \infty}\int_{E_N}F(\zeta)\mu_N(d\zeta)\,.
$$
Using Lemma~\ref{lille}, we arrive at 
$$
\int_{h^s} F(\Phi(t)( \zeta))\mu(d\zeta)=\int_{h^s} F(\zeta) \mu(d\zeta)
$$
which proves the invariance of $\mu$ under $\Phi(t)$, after using a final approximation argument of an arbitrary $L^1(h^s;d\mu)$ 
function by $C_b(h^s;\R)$ functions. Such an approximation is classical by first approximating the indicator functions of the open sets and then using the regularity of the measure $\mu$. 
\\

Summarizing the previous discussion, we get the following statement. 
\begin{theorem}\label{thm1}
Let $\mu$ be a measure on $h^s$, $s\geq 0$, defined from a sequence $(\zeta^{*}_n)_{n\in\N}\in h^s$, $s\geq 0$ and a rotation invariant probability measure on $\C$ as in \eqref{theta}. Then $\mu$ is invariant under the flow $\Phi(t)$ of \eqref{Birk}. 
\end{theorem}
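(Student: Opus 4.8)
The statement is a synthesis of Lemmas~\ref{lille} and \ref{div} together with \eqref{leoleo}, so the plan is to chain the corresponding identities and then upgrade the conclusion from continuous to measurable test functions. I would fix $F\in C_b(h^s;\R)$ and $t\in\R$ and first record the finite-dimensional identity
\[
\int_{h^s} F(\Phi_N(t)(\pi_N\zeta))\,\mu(d\zeta)
=\int_{E_N} F(\Phi_N(t)(\zeta))\,\mu_N(d\zeta)
=\int_{E_N} F(\zeta)\,\mu_N(d\zeta).
\]
Here the first equality is the Fubini computation recorded above (using that $\Phi_N(t)(\pi_N\zeta)$ depends only on $\pi_N\zeta$ and that the $\mu$-law of $\pi_N\zeta$ is $\mu_N$), while the second is exactly Lemma~\ref{div}, which itself rests on the rotation invariance of the density $f$ combined with the Liouville theorem for the Hamiltonian system \eqref{Hamilton}.

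Next I would let $N\to\infty$ in the two ends of this chain. On the left, \eqref{leoleo} — a consequence of the convergence \eqref{converge} of $\Phi_N(t)(\pi_N\zeta)$ to $\Phi(t)(\zeta)$ in $h^s$ and of dominated convergence, $F$ being bounded and continuous — yields
\[
\lim_{N\to\infty}\int_{h^s} F(\Phi_N(t)(\pi_N\zeta))\,\mu(d\zeta)=\int_{h^s} F(\Phi(t)(\zeta))\,\mu(d\zeta).
\]
On the right, Lemma~\ref{lille} gives $\lim_{N\to\infty}\int_{E_N} F(\zeta)\,\mu_N(d\zeta)=\int_{h^s}F(\zeta)\,\mu(d\zeta)$. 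Combining the two limits I obtain
\[
\int_{h^s} F(\Phi(t)(\zeta))\,\mu(d\zeta)=\int_{h^s} F(\zeta)\,\mu(d\zeta)\qquad\text{for every }F\in C_b(h^s;\R).
\]

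Finally I would promote this to the invariance $\mu(\Phi(t)^{-1}(A))=\mu(A)$ for every Borel set $A\subset h^s$. Since $h^s$ is a separable Hilbert space, $\mu$ — and likewise the pushforward $\Phi(t)_*\mu$, the map $\Phi(t)$ being continuous on $h^s$ — is an inner and outer regular Borel probability measure; approximating the indicator of an open set from below by bounded continuous functions, and then using regularity to pass from open sets to arbitrary Borel sets, the identity for $C_b$ functions extends to $F=\chi_A$. This is precisely the asserted invariance of $\mu$ under $\Phi(t)$.

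The proof of Theorem~\ref{thm1} is thus essentially bookkeeping once the two genuinely delicate inputs are in place: the weak-type convergence $\mu_N\rightharpoonup\mu$ of Lemma~\ref{lille} and the domination underlying \eqref{leoleo}, which hinges on the second-moment hypothesis \eqref{moment2} and on the explicit solution formulas \eqref{Birk_sol}, \eqref{Birk_sol_N}. The main point to watch is that everything is carried out for $s\ge 0$, where \eqref{Birk_sol} is well defined and $h^s\subset h^0$, so that the nonlinear frequencies $\beta_{N,n}$ and their $N\to\infty$ limits make sense $\mu$-almost surely; the subsequent extension to $s>-1/2$ is not done at this stage but obtained afterwards by transporting $\mu$ through the Birkhoff map of \cite{GK,GKP}.
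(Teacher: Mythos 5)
Your proposal is correct and follows essentially the same route as the paper: the Fubini identity identifying the $\mu$-law of $\pi_N\zeta$ with $\mu_N$, the finite-dimensional invariance of Lemma~\ref{div} (rotation invariance of $f$ plus Liouville), the limits \eqref{leoleo} and Lemma~\ref{lille}, and a final regularity/approximation step to pass from $C_b(h^s;\R)$ test functions to Borel sets. No gaps to report.
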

\section{Proof of  Theorem~\ref{thm2} and a conjecture }
Using Theorem~\ref{thm1}, Lemma~\ref{lem1} and \cite{GK,GKP}, we can prove Theorem~\ref{thm2}.
\begin{proof}[Proof of  Theorem~\ref{thm2}]
Let $s>-1/2$. Let us fix a sequence  $(\zeta^{*}_n)_{n\in\N}\in h^{s+\frac{1}{2}}$ such that
$$
\zeta^{*}_n\neq 0, \forall\, n\in\N,\quad (\zeta^{*}_n)_{n\in\N}\notin h^{\sigma},\,\,\ \forall\, \, \sigma>s+\frac{1}{2}.
$$
An example of such a sequence is 
$$
\zeta^{*}_n=\frac{1}{n^{s+1}\log(n+1) }\,\, ,\quad n\geq 1 \,.
$$
Let $\mu$ be the measure on $ h^{s+\frac{1}{2}}$ associated with this sequence and a rotation invariant probability measure on $\C$ as in \eqref{theta}.
\\

Thanks to \cite{GK,GKP} there is a map (the Birkhoff map) 
$$
\phi\colon H^s_0(\T)\longrightarrow  h^{s+\frac{1}{2}}
$$
which is a bi-continuous bijection such that 
$$
\Phi_1(t)=\phi^{-1}\circ \Phi(t)\circ \phi
$$
generates the Benjamin-Ono flow on $H^s_0(\T)$ ($\Phi(t)$ is the flow appearing in Theorem~\ref{thm1}). Moreover,  we also have that $\phi$ is a bijection from $H^\sigma_0(\T)$ to $ h^{\sigma+\frac{1}{2}}$ for every $\sigma\geq s$.
\\

We define the measure $\rho_s$ as 
$$
\rho_s(A)=\mu(\phi(A)),
$$
where $A$ is an arbitrary Borel set of $H^s_0(\T)$. Now, using the definition and  Theorem~\ref{thm1} we can write 
$$
\rho_s(\Phi_1(t)(A))=\mu(\Phi(t)( \phi(A)))=\mu( \phi(A))=\rho_s(A),
$$
where $A$ is again an arbitrary Borel set of $H^s_0(\T)$. Therefore $\rho_s$ is invariant under $\Phi_1(t)$. 
\\

Using Lemma~\ref{lem1}, we can write for $\sigma>s$,
$$
\rho_s(H^\sigma_0(\T))=\mu(\phi(H^\sigma_0(\T)))=\mu(h^{\sigma+\frac{1}{2}})=0.
$$ 
Let $A$ be such that $\rho_s(A)=1$. Then $\mu(\phi(A))=1$. Using  Lemma~\ref{lem1}, we obtain that $\phi(A)$ is dense in 
$ h^{s+\frac{1}{2}}$. Thanks to the continuity and the bijectivity of $\phi^{-1}$, we have that $A=\phi^{-1}(\phi(A))$ is a dense set of $H^s_0(\T)$ 
(a dense set is mapped to a dense set by the continuous bijection $\phi^{-1}$). This completes the proof of  Theorem~\ref{thm2}. 
\end{proof}
Of course, it would be interesting to understand better the measures $\rho_s$ appearing in Theorem~\ref{thm2}. 
They should heavily depend on the choice of the measure in \eqref{theta} and the sequence $(\zeta^{*}_n)_{n\in\N}$ in the definition of $\mu$.
Here is a conjecture we have.
\begin{conjecture}
It seems reasonable to conjecture that if $\mu$ is a suitable gaussian measure on $h^{s+\frac{1}{2}}$ then the corresponding measure $\rho_s$ appearing in Theorem~\ref{thm2} is absolutely continuous with respect to a suitable gaussian measure on $H^s_0(\T)$. For example, we conjecture that the image of the Gibbs measure defined in \cite{Tz_ptrf} by the Birkhoff map defined in \cite{GK,GKP} is the measure on $h^s$, $s<1/2$ defined as the limit as $N\rightarrow \infty$ of the measures 
$$
G_{N}(\zeta)  \mu(d\zeta),
$$
where the measure $\mu$ is constructed from $\theta$ a standard complex gaussian and the sequence 
$$
\zeta^{*}_n=n^{-1},\quad  n\in\N. 
 $$
The density $G_N$ is given by 
$$
G_{N}(\zeta)=\chi\big(\sum_{k=1}^{N}n|\zeta_n|^2-c_N \big)\exp\Big(
\sum_{k=1}^N \Big(\sum_{k_1=k}^N |\zeta_{k_1}|^2\Big)^2
\big),
$$
where $\chi:\R\rightarrow\R$ is a continuous function with a compact support and $c_N\approx \log(N)$ is a renormalisation constant. We conjecture that a similar procedure may lead to the measures considered in \cite{DTV, TV13a,TV13b,TV14}\,.
\end{conjecture}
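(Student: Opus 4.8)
The plan is to treat separately the qualitative assertion --- absolute continuity of $\rho_s$ with respect to a Gaussian --- and the quantitative one, the explicit description of the limiting density in the Gibbs case; both rest on a single structural fact about the Birkhoff map near the origin. By \cite{GK,GKP} the map $\phi$ fixes $0$ and its differential $d\phi(0)$ is a diagonal Fourier multiplier, $\widehat{u}(n)\mapsto c_n\widehat{u}(n)$ with $c_n\asymp n^{-1/2}$ (forced by the scaling $\phi\colon H^\sigma_0(\T)\to h^{\sigma+\frac12}$). Hence $d\phi(0)^{-1}$ is linear and diagonal, so the push-forward of a product Gaussian $\mu$ on $h^{s+\frac12}$ by $d\phi(0)^{-1}$ is again Gaussian; write it $\gamma_s$. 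For the specific choice $\zeta^{*}_n=n^{-1}$ and standard complex Gaussians one computes $\widehat{u}(n)=n^{-1/2}g_n$, so $\gamma_s$ is exactly the mean-zero Gaussian with covariance $|D|^{-1}$, i.e. the Gaussian underlying the Gibbs measure of \cite{Tz_ptrf}; this is the Gaussian the conjecture refers to.

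For the qualitative assertion, transport everything to the Birkhoff side: since $\rho_s=(\phi^{-1})_*\mu$ and $\gamma_s=(d\phi(0)^{-1})_*\mu$, the relation $\rho_s\ll\gamma_s$ is equivalent to $(d\phi(0)\circ\phi^{-1})_*\mu\ll\mu$. Writing $d\phi(0)\circ\phi^{-1}=\mathrm{Id}+\Psi$, this would follow from a Ramer--Kusuoka type theorem on nonlinear transformations of Gaussian measures, granted that the nonlinear correction $\Psi$ maps $h^{s+\frac12}$ into the Cameron--Martin space of $\mu$, is of class $C^1$ there with Hilbert--Schmidt differential, and that $\mathrm{Id}+D\Psi$ is invertible. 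In substance one needs a quantitative smoothing estimate: that the inverse Birkhoff map differs from its linearisation by an operator gaining enough regularity on the scale $h^{s+\frac12}$. Such estimates for the Benjamin--Ono Birkhoff map at negative Sobolev regularity are, to my knowledge, not yet available, and obtaining them is the main difficulty --- in the same spirit as the extension problem of the final section.

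For the quantitative assertion I would argue through finite-dimensional approximations, mirroring the proof of Lemma~\ref{lille}. Recall that in \cite{Tz_ptrf} the Gibbs measure $\nu$ on $H^\sigma_0(\T)$, $\sigma<0$, is realised as the weak limit of weighted Gaussians $\nu_N$ of the schematic form $\chi\big(\|\Pi_N u\|_{L^2}^2-c_N\big)\exp(R_N(u))\,d\gamma_\sigma(u)$, where $\gamma_\sigma$ has covariance $|D|^{-1}$, $R_N$ is the truncated cubic part of the Benjamin--Ono energy, and $c_N\sim\log N$ renormalises $\|\Pi_N u\|_{L^2}^2$. The algebraic dictionary of \cite{GK,GKP}, already visible in the Hamiltonian \eqref{Hamilton}, turns the momentum $\tfrac12\|u\|_{L^2}^2$ into a fixed multiple of $\sum_n n|\zeta_n|^2$, the cubic energy into $\tfrac12\sum_k\big(\sum_{k_1\ge k}|\zeta_{k_1}|^2\big)^2$, and the quadratic energy $-\tfrac12\|D^{1/2}u\|_{L^2}^2$ into $-\tfrac12\sum_n n^2|\zeta_n|^2$, which up to a fixed multiple is the log-density of the product Gaussian $\mu$ attached to $\zeta^{*}_n=n^{-1}$; moreover $\mathbb{E}_\mu\sum_{n\le N}n|\zeta_n|^2=\sum_{n\le N}n^{-1}\sim\log N$ reproduces the renormalisation constant $c_N$. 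Thus $G_N(\zeta)\,\mu(d\zeta)$ is precisely the measure that $\phi_*\nu_N$ ought to be.

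It remains to show that it is, asymptotically. The obstruction is that $\phi$ does not intertwine the physical truncation $\Pi_N$ with the Birkhoff truncation $\pi_N$, so $\phi_*\nu_N$ is not literally a function of $\pi_N\zeta$; one must instead compare $\pi_N\circ\phi$ with the genuine Birkhoff map of the finite-dimensional system \eqref{Birk_N}, using uniform-on-bounded-sets bounds from \cite{GK,GKP} and exploiting the cutoff $\chi$ to confine the comparison to a set of bounded size. Granting such a comparison, one passes to the limit exactly as in Lemma~\ref{lille} with the weight $G_N$ inserted --- which is legitimate because $G_N$ is bounded on the support of $\chi$ and converges $\mu$-almost surely once the renormalisation is performed --- to obtain $\phi_*\nu=\lim_N G_N\mu=:\widetilde\nu$, whence $\widetilde\nu\ll\mu$ with density $G=\lim_N G_N\in L^1(d\mu)$, i.e. $\widetilde\nu$ is absolutely continuous with respect to the Gaussian $\mu$. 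The uniform comparison of $\phi$ with its finite-dimensional truncations, robust enough to survive the negative regularity and the renormalisation, is the crux of the argument; a satisfactory treatment of it should, in addition, yield the analogous statements for the measures of \cite{DTV,TV13a,TV13b,TV14}.
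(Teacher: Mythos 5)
This statement is a \emph{conjecture}: the paper offers no proof of it, and explicitly leaves open even the qualitative question of how $\rho_s$ relates to Gaussian measures on $H^s_0(\T)$. So the relevant question is whether your proposal actually closes the conjecture, and it does not: at both decisive points you defer to inputs that are not available and that you yourself flag as open. For the qualitative part, the entire argument funnels through a Ramer--Kusuoka type quasi-invariance theorem applied to $d\phi(0)\circ\phi^{-1}=\mathrm{Id}+\Psi$, and this requires precisely the quantitative smoothing of the inverse Birkhoff map at regularity $h^{s+\frac12}$ (values of $\Psi$ in the Cameron--Martin space, $C^1$ dependence, Hilbert--Schmidt differential, invertibility of $\mathrm{Id}+D\Psi$) that is not established in \cite{GK,GKP} nor elsewhere; absolute continuity of a pushforward under a nonlinear map can genuinely fail without such hypotheses, so this is not a technical afterthought but the mathematical content of the conjecture itself. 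Even the starting point --- that $d\phi(0)$ is an exactly diagonal Fourier multiplier with $c_n\asymp n^{-1/2}$ --- is argued by a scaling heuristic rather than cited with the precision the Ramer--Kusuoka framework would need.

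For the quantitative part (identification of $\phi_*\nu$ with $\lim_N G_N\,\mu$), your dictionary between the truncated Benjamin--Ono energy, the momentum, and the Birkhoff-side quantities is plausible and consistent with the Hamiltonian \eqref{Hamilton}, and the computation $\mathbb{E}_\mu\sum_{n\le N}n|\zeta_n|^2\sim\log N$ correctly matches the renormalisation constant. But the step you call the crux --- controlling the mismatch between the physical truncation $\Pi_N$ and the Birkhoff truncation $\pi_N$, uniformly in $N$, at negative Sobolev regularity and in the presence of the renormalisation --- is exactly where the argument would have to do real work, and you only ``grant'' it. You also need, and do not prove, that $G_N$ converges in $L^1(d\mu)$ (boundedness on the support of $\chi$ is not obvious once the renormalisation constant $c_N\to\infty$ is subtracted only inside $\chi$ and not inside the exponential weight, whose convergence requires a separate probabilistic estimate in the spirit of \cite{Tz_ptrf}). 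In short, your proposal is a reasonable research programme that reduces the conjecture to two open problems, but it is not a proof, and the paper itself treats the statement as open for precisely these reasons.
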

\section{An almost sure extension of $\Phi(t)$ to $l^4$, after a renormalisation }
In this section we consider  a sequence $(\alpha_n)_{n\in \N}$ such that 
\begin{equation}\label{divergence}
\sum_{n\in \N}|\alpha_n|^2=\infty
\end{equation}
but
$$
\sum_{n\in\N}|\alpha_n|^4<\infty\,.
$$
The sequence 
$$
\alpha_n=\frac{1}{\sqrt{n}}, \quad n\geq 1
$$
is of particular interest because this sequence (together with gaussians) should appear in the analysis of the Benjamin-Ono equation with data distributed according to the white noise on the circle.  We refer to \cite{Oh} for the invariance of the white noise on the circle under the periodic KdV flow. 
\\

Let $(g_n(\omega))_{n\in \N}$ as in the previous sections.  We suppose that the density $f$ also satisfies
\begin{equation}\label{moment4}
\int_{\R^2}(x^4+y^4)f(x,y)dxdy<\infty,\quad \int_{\R^2}(x^2+y^2)f(x,y)dxdy=1. 
\end{equation}
Let $\mu$ be the measure defined by the map
\begin{equation*}
\omega\longmapsto \big(\alpha_n\, g_{n}(\omega)\big)_{n\in\N}.
\end{equation*}
The flow $\Phi(t)$ is not defined $\mu$ almost surely because, in view of \eqref{divergence} and Lemma~\ref{lem1}, one has $\mu(h^0)=0$. However, using the inequality 
$$
\|(\alpha_n)_{n\in \N}\|_{h^{-1}}\leq C(\sum_{n\in\N}|\alpha_n|^4)^{\frac{1}{4}}
$$
we obtain that 
$$
\Big(\pi_N\big(\alpha_n\, g_{n}(\omega)\big)_{n\in\N}\Big)_{N\in\N}
$$
is a Cauchy sequence in $L^4(\Omega;h^{-1})$.
Therefore, we see $\mu$ as a measure on $h^{-1}$ equipped with the Borel sigma algebra ${\mathcal B}$.  Solving  \eqref{Birk} on the support of $\mu$ is not possible because, in view of \eqref{divergence},  the angles diverge almost surely. However, we can make converge the phases after a suitable renormalisation. 
Recall that the flow map of \eqref{Birk_N} is denoted by $\Phi_N(t)$ and is defined as 
$$
\pi_N \Phi_N(t)(\zeta)=\Big(\zeta_n e^{it\beta_{N,n}(\zeta)}\Big)_{1\leq n\leq N},\quad \zeta\in E_N\,,
$$
where
$$
\beta_{N,n}(\zeta)=n^2-2\sum_{k=1}^N \min(n,k) |\zeta_k|^2, \quad \zeta\in E_N.
$$
We have the following statement.
\begin{theorem}\label{klkl}
Let $n\geq 1$. There is $h_n\in L^2(d\mu)$ such that the sequence 
$$
(\beta_{N,n}(\pi_N \zeta)+2nc_{N})_{N\geq 1}
$$
converges $\mu$ almost surely to $h_n(\zeta)$, where the divergent constant $c_N$ is defined by
$$
c_N=\sum_{k=1}^N |\alpha_k|^2\,.
$$ 
As a consequence, for every $n\in\N$  the sequence 
$$
\big(e^{i2t nc_N} (\Phi_N(t)(\pi_N \zeta))_{n}\big)_{N\geq 1}
$$
converges $\mu$ almost surely to
$
\zeta_n e^{it h_n(\zeta)}  
$
and moreover
$$
\Big(\big(e^{i2t nc_N} (\Phi_N(t)(\pi_N \zeta))_{n}\big)_{n\in\N}\Big)_{N\geq 1}
$$
converges $\mu$ almost surely in $h^{-1}$ to
$$
\Big(\zeta_n e^{it h_n(\zeta)}  
\Big)_{n\in\N}\,.
$$
In addition, the measure $\mu$ is invariant under the map 
\begin{equation}\label{future}
(\zeta_n)_{n\in\N}\longmapsto \Big(\zeta_n e^{it h_n(\zeta)}\Big)_{n\in\N}\,.
\end{equation}
\end{theorem}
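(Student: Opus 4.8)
The plan is to identify the almost sure limit $h_n$ by Kolmogorov's convergence theorem for sums of independent random variables, deduce the two convergence statements from this by continuity together with dominated convergence, and then obtain the invariance of $\mu$ by transferring the finite-dimensional statement of Lemma~\ref{div} to the limit through the approximation $\Phi_N(t)$, exactly in the spirit of the argument after Lemma~\ref{lille}. First I would rewrite the renormalized frequency: adding $2nc_N=2n\sum_{k=1}^N|\alpha_k|^2$ to $\beta_{N,n}(\pi_N\zeta)=n^2-2\sum_{k=1}^N\min(n,k)|\zeta_k|^2$ and splitting the sum at $k=n$ gives
$$\beta_{N,n}(\pi_N\zeta)+2nc_N=n^2+2\sum_{k=1}^{n-1}\big(n|\alpha_k|^2-k|\zeta_k|^2\big)-2n\sum_{k=n}^N\big(|\zeta_k|^2-|\alpha_k|^2\big).$$
On the support of $\mu$ one has $|\zeta_k|^2=|\alpha_k|^2|g_k(\omega)|^2$, so the terms $X_k:=|\alpha_k|^2(|g_k|^2-1)$ of the last sum are independent, centered by the normalisation in \eqref{moment4}, and satisfy $\sum_k\mathrm{Var}(X_k)\le C\sum_k|\alpha_k|^4<\infty$ because $\mathbb{E}|g_k|^4\le 2\int_{\R^2}(x^4+y^4)f<\infty$ by \eqref{moment4}. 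Kolmogorov's theorem then gives that $\sum_k X_k$ converges $\mu$-a.s.\ and in $L^2(d\mu)$, whence $\beta_{N,n}(\pi_N\zeta)+2nc_N$ converges, $\mu$-a.s.\ and in $L^2(d\mu)$, to
$$h_n(\zeta):=n^2+2\sum_{k=1}^{n-1}\big(n|\alpha_k|^2-k|\zeta_k|^2\big)-2n\sum_{k=n}^\infty\big(|\zeta_k|^2-|\alpha_k|^2\big),$$
which lies in $L^2(d\mu)$ since the remaining finitely many terms do ($\mathbb{E}|\zeta_k|^4=|\alpha_k|^4\mathbb{E}|g_k|^4<\infty$). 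Note $h_n$ does not depend on $t$.

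Next I would treat the convergence of the flows. Since $(\Phi_N(t)(\pi_N\zeta))_n=\zeta_n e^{it\beta_{N,n}(\pi_N\zeta)}$ for $n\le N$, continuity of $x\mapsto e^{itx}$ gives $e^{i2tnc_N}(\Phi_N(t)(\pi_N\zeta))_n=\zeta_n e^{it(\beta_{N,n}(\pi_N\zeta)+2nc_N)}\to\zeta_n e^{ith_n(\zeta)}$ $\mu$-a.s. For the convergence in $h^{-1}$ I would note that the $n$-th coordinate of the difference between the $N$-th term and the limit has modulus $\le 2|\zeta_n|$ for every $n$ (it equals $|\zeta_n|$ when $n>N$), so the squared $h^{-1}$-distance is dominated by $4\sum_n n^{-2}|\zeta_n|^2=4\|\zeta\|_{h^{-1}}^2$, which is finite $\mu$-a.s.\ because $\mu(h^{-1})=1$; since each coordinate difference tends to $0$, dominated convergence for series gives the asserted convergence in $h^{-1}$. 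In particular $T_t:(\zeta_n)_n\mapsto(\zeta_n e^{ith_n(\zeta)})_n$ is a $\mu$-a.s.\ defined measurable self-map of $h^{-1}$, which moreover preserves the $h^{-1}$-norm pointwise.

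It remains to prove the invariance. Fix $F\in C_b(h^{-1};\R)$ and let $S_{N,t}\colon E_N\to E_N$ be $S_{N,t}(\eta)_n=\eta_n e^{it(\beta_{N,n}(\eta)+2nc_N)}$. Since $\pi_N$ pushes $\mu$ forward to $\mu_N$ and the sequence $\big(e^{i2tnc_N}(\Phi_N(t)(\pi_N\zeta))_n\big)_{n}$ is the $h^{-1}$-embedding of $S_{N,t}(\pi_N\zeta)$, one has
$$\int_{h^{-1}} F\big((e^{i2tnc_N}(\Phi_N(t)(\pi_N\zeta))_n)_n\big)\,\mu(d\zeta)=\int_{E_N}(F\circ S_{N,t})\,d\mu_N.$$
Now $S_{N,t}=D_{N,t}\circ\Phi_N(t)$, where $D_{N,t}$ is the fixed diagonal rotation $\eta_n\mapsto\eta_n e^{i2tnc_N}$: the factor $\Phi_N(t)$ preserves $\mu_N$ by Lemma~\ref{div}, while $D_{N,t}$ preserves $\mu_N$ because $f$ is radial, hence $S_{N,t}$ preserves $\mu_N$. (Equivalently, $S_{N,t}$ is the time-$t$ Hamiltonian flow of $H-c_N\sum_n n(\xi_n^2+\eta_n^2)$, so the Liouville argument in the proof of Lemma~\ref{div} applies verbatim with $\beta_{N,n}$ replaced by $\beta_{N,n}+2nc_N$.) Therefore the right-hand side equals $\int_{E_N}F\,d\mu_N=\int_{h^{-1}}F(\pi_N\zeta)\,\mu(d\zeta)$. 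Letting $N\to\infty$, the left-hand side converges to $\int_{h^{-1}}F(T_t\zeta)\,\mu(d\zeta)$ by the previous paragraph and dominated convergence, while the right-hand side converges to $\int_{h^{-1}}F\,d\mu$ since $\pi_N\zeta\to\zeta$ in $h^{-1}$ $\mu$-a.s. Thus $\int F(T_t\zeta)\,d\mu=\int F\,d\mu$ for every $F\in C_b(h^{-1};\R)$, and the invariance of $\mu$ under $T_t$ follows by approximating indicators of Borel sets by continuous functions using the regularity of $\mu$, precisely as in the passage after Lemma~\ref{lille}.

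The main obstacle is the first step: everything hinges on turning the divergent phase correction into a convergent series of independent centered random variables, which is exactly where the fourth-moment hypothesis and the normalisation in \eqref{moment4} are used. Once $h_n$ is in hand the remaining steps are soft; the only point requiring a little care is that the $h^{-1}$-convergence in the second step is strong enough to be passed to the limit inside $\int F(\,\cdot\,)\,d\mu$ in the invariance argument.
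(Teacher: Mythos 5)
Your proof is correct and follows essentially the same route as the paper: the same splitting of $\beta_{N,n}(\pi_N\zeta)+2nc_N$ into a finite part plus a series of independent, centered variables $|\alpha_k|^2(|g_k|^2-1)$ handled by Kolmogorov's theorem under \eqref{moment4}, and the same finite-dimensional invariance (your $S_{N,t}=D_{N,t}\circ\Phi_N(t)$ factorization is exactly the content and proof of Lemma~\ref{div_bis}) followed by the passage to the limit. The only, harmless, deviation is that you bypass Lemma~\ref{lille} on the right-hand side by using the push-forward identity $\int_{E_N}F\,d\mu_N=\int_{h^{-1}}F(\pi_N\zeta)\,\mu(d\zeta)$ together with $\pi_N\zeta\to\zeta$ and dominated convergence, and that you spell out the $h^{-1}$-convergence and the $L^2(d\mu)$ membership of $h_n$, which the paper leaves implicit.
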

\begin{remark}\label{rkrkrk}
If one succeeds to extend the  Birkhoff map to the support of the measure $\mu$ considered in Theorem \ref{klkl},  then one will obtain a renormalized Benjamin-Ono flow as the conjugate of the flow \eqref{future}, for many initial data of Sobolev regularity below $-1/2$.
We believe that to obtain such an extension of the  Birkhoff map, we have to make appeal to unavoidable probabilistic arguments.  
As a consequence of such an extension of the  Birkhoff map, one will obtain the probabilistic well-posedness of the Benjamin-Ono equation in a super-critical regularity regime. More precisely, the proof of Theorem~\ref{klkl} suggests the probabilistic well-posedenss of the  Benjamin-Ono equation in the Fourier-Lebesgue spaces ${\mathcal F}L^{-\frac{1}{2},4}$ (i.e. data $u_0$ such that $\langle n\rangle ^{-1/2}\widehat{u_0}(n)\in l^4(\Z)$). Therefore we believe that the probabilistic well-posedness theory developed in the last years for many dispersive models can be extended to the Benjamin-Ono equation with data of super-critical regularity. 
\end{remark}
\begin{proof}[Proof of Theorem \ref{klkl}]
We will use a renormalisation argument as for example in \cite{Tz}.  We can write
$$
\beta_{N,n}(\pi_N \zeta)+2nc_N=
n^2-2\sum_{k=1}^n (k-n)|\zeta_k|^2-2n\sum_{k=1}^N( |\zeta_k|^2-|\alpha_k|^2)\,.
$$
Therefore, we are reduced  to show that
\begin{equation}\label{Kolm}
\Big(\sum_{k=1}^N( |\zeta_k|^2-|\alpha_k|^2)\Big)_{N\geq 1}
\end{equation}
converges $\mu$ almost surely. 
For that purpose we aim to apply the Kolmogorov almost sure convergence theorem in the probability space $(h^{-1}, {\mathcal B},\mu)$. 
We have that $(|\zeta_k|^2-|\alpha_k|^2)_{k\in\N}$ is a family of independent random variables in $(h^{-1}, {\mathcal B},\mu)$. 
Moreover,  thanks to \eqref{moment4}, we have that 
$$
\int_{h^{-1}}(|\zeta_k|^2-|\alpha_k|^2)\mu(d\zeta)=\int_{\Omega} (|g_k(\omega)|^2-1)dp(\omega)=0\,.
$$
In order to apply the Kolmogorov theorem, we need to show that \eqref{Kolm} converges in $L^2(d\mu)$. Let us show that \eqref{Kolm} is
 a Cauchy sequence in $L^2(d\mu)$. For that purpose, by definition, we can write for $N<M$
$$
\Big\|\sum_{k=N+1}^M( |\zeta_k|^2-|\alpha_k|^2)\Big\|_{L^2(d\mu(\zeta))}=
\Big\|\sum_{k=N+1}^M|\alpha_k|^2(|g_k(\omega)|^2-1)\Big\|_{L^2(\Omega)}\,.
$$
Therefore, using the independence and \eqref{moment4}, we obtain that there is a finite constant $C$ such that 
$$
\Big\|\sum_{k=N+1}^M|\alpha_k|^2(|g_k(\omega)|^2-1)\Big\|^2_{L^2(\Omega)}
=
C\, \sum_{k=N+1}^M|\alpha_k|^4
$$
which tends to zero as $N\rightarrow\infty$.  This proves the $\mu$ almost sure convergence of the sequence 
$$
(\beta_{N,n}( \pi_N \zeta)+2nc_{N})_{N\geq 1}.
$$
The proof of the invariance can be done as in the previous section. The only difference is that Lemma~\ref{div} should be replaced by the following statement. 
\begin{lemma}\label{div_bis}
Let $\tilde{\Phi}_{N}(t)$ be defined as 
$$
(\tilde{\Phi}_{N}(t)(\zeta))_n:=e^{i2t nc_N} (\Phi_N(t)(\zeta))_{n},\quad \zeta\in E_N\,.
$$
Then for every $F\in C_b(h^s;\R)$,
$$
\int_{E_N} F(\tilde{\Phi}_N(t)(\zeta))\mu_N(d\zeta)=\int_{E_N} F(\zeta) \mu_N(d\zeta)\,.
$$
\end{lemma}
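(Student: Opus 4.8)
The plan is to observe that $\tilde{\Phi}_N(t)$ is the composition of the Hamiltonian flow $\Phi_N(t)$ of Lemma~\ref{div} with a \emph{fixed} linear rotation, and that each of these two maps separately preserves $\mu_N$. Introduce the bounded linear map $R_N(t)\colon h^s\to h^s$ defined by
\begin{equation*}
\big(R_N(t)\zeta\big)_n=e^{2itnc_N}\zeta_n\quad(1\le n\le N),\qquad \big(R_N(t)\zeta\big)_n=\zeta_n\quad(n>N).
\end{equation*}
Since $c_N=\sum_{k=1}^N|\alpha_k|^2$ does not depend on $\zeta$, the map $R_N(t)$ genuinely is linear and unitary: on $E_N$ it rotates the $n$-th coordinate plane $\C\simeq\R^2$ by the constant angle $2tnc_N$. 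Comparing the explicit formula for $\tilde{\Phi}_N(t)$ with \eqref{Birk_sol_N}, we have $\tilde{\Phi}_N(t)=R_N(t)\circ\Phi_N(t)$ on $E_N$.

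I would then record the two elementary facts. First, $(R_N(t))_*\mu_N=\mu_N$: the map $R_N(t)$ has Jacobian determinant $1$ on $\C^N$, hence preserves the Lebesgue measure $d\zeta_1\cdots d\zeta_N$, and since the density $f$ of $\theta$ is radial one has $\prod_{n=1}^N f(\zeta_n)=\prod_{n=1}^N f(e^{2itnc_N}\zeta_n)$; equivalently, each coordinate law under $\mu_N$, being the law of $\alpha_n g_n$ with $g_n\sim\theta$ rotation invariant, is unchanged by a rotation. Second, $\Phi_N(t)$ preserves $\mu_N$ — this is exactly Lemma~\ref{div}. Combining the two, for $F\in C_b(h^s;\R)$ (so that $F\circ R_N(t)\in C_b(h^s;\R)$ as well, $R_N(t)$ being continuous on $h^s$),
\begin{multline*}
\int_{E_N}F(\tilde{\Phi}_N(t)(\zeta))\,\mu_N(d\zeta)
=\int_{E_N}\big(F\circ R_N(t)\big)\big(\Phi_N(t)(\zeta)\big)\,\mu_N(d\zeta)\\
=\int_{E_N}\big(F\circ R_N(t)\big)(\zeta)\,\mu_N(d\zeta)
=\int_{E_N}F(\zeta)\,\mu_N(d\zeta),
\end{multline*}
where the second equality uses Lemma~\ref{div} applied to $F\circ R_N(t)$ and the third uses $(R_N(t))_*\mu_N=\mu_N$. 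This is the claimed identity.

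A more conceptual variant, which I would mention in passing, is that $\tilde{\Phi}_N(t)$ is itself the time-$t$ Hamiltonian flow on $E_N$ of $\tilde{H}=H-c_N\sum_{k=1}^N k(\xi_k^2+\eta_k^2)$: the additional term only shifts each of the still-conserved frequencies $\beta_{N,n}$ by the constant $2nc_N$, so the solution formula \eqref{Birk_sol_N} merely acquires the factor $e^{2itnc_N}$ in its $n$-th coordinate. Then Liouville's theorem for $\tilde{H}$, together with the radial invariance of $f$, yields the conclusion by literally the computation in the proof of Lemma~\ref{div}. I do not expect a genuine obstacle here: the whole content is that a $\zeta$-independent shift of the frequencies amounts to post-composing the flow with a fixed rotation, which preserves $\mu_N$ because $\mu_N$ is a product of radial laws; the factorization $\tilde{\Phi}_N(t)=R_N(t)\circ\Phi_N(t)$ reduces everything to Lemma~\ref{div}.
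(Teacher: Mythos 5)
Your proof is correct and is essentially the paper's argument in a slightly repackaged form: the paper performs the same two steps as successive changes of variables in the integral (first the constant rotation $\zeta_n\mapsto\zeta_n e^{2itnc_N}$, using the radial density and the unchanged volume element, then the flow map as in Lemma~\ref{div}), whereas you factor $\tilde{\Phi}_N(t)=R_N(t)\circ\Phi_N(t)$ and invoke Lemma~\ref{div} applied to $F\circ R_N(t)$ together with $(R_N(t))_*\mu_N=\mu_N$. The side remark that $\tilde{\Phi}_N(t)$ is itself the Hamiltonian flow of $H-c_N\sum_k k(\xi_k^2+\eta_k^2)$ is also accurate.
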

\begin{proof}
Again we set  
$
\tilde{F}(\zeta_1,\cdots,\zeta_N):= F(\zeta_1,\cdots,\zeta_N,0,0, \cdots)
$
and we study 
\begin{equation}\label{L1_bis}
\int_{\C^N}
\tilde{F}\big(\zeta_1,\cdots,\zeta_N\big) \big(\prod_{n=1}^N f(\zeta_n)\big) d\zeta_1 \cdots d\zeta_N\,.
\end{equation}
First, we make the variable change 
$$
\zeta_n\longmapsto \zeta_n e^{2it n c_N}\,,\quad 1\leq n\leq N
$$
which leaves the volume element and $f(\zeta_n)$ unchanged. Therefore \eqref{L1_bis} equals 
\begin{equation}\label{L2_bis}
\int_{\C^N}
\tilde{F}\big(\zeta_1\, e^{2it c_N},\cdots,\zeta_N\, e^{2it N c_N}\big) \big(\prod_{n=1}^N f(\zeta_n)\big) d\zeta_1 \cdots d\zeta_N\,.
\end{equation}
Next, we make the variable change 
$$
\zeta_n\longmapsto \zeta_n e^{it\beta_{N,n}(\zeta_1,\cdots,\zeta_N)}\,,\quad 1\leq n\leq N
$$
and as in the proof of Lemma~\ref{div} we obtain that \eqref{L2_bis} equals 
\begin{equation*}
\int_{\C^N}
\tilde{F}\big(\zeta_1\, e^{2it c_N}e^{it\beta_{N,1}(\zeta_1,\cdots,\zeta_N)} ,\cdots,\zeta_N\,e^{2it N c_N}e^{it\beta_{N,N}(\zeta_1,\cdots,\zeta_N)}   \big) \big(\prod_{n=1}^N f(\zeta_n)\big) d\zeta_1 \cdots d\zeta_N\,.
\end{equation*}
This completes the proof of Lemma~\ref{div_bis}.
\end{proof}
This completes the proof of Theorem~\ref{klkl}.
\end{proof}
{\bf{Acknowledgements:}}
I am grateful to Patrick G\' erard for several discussions which motivated this work.  
This work is partially supported by the ANR project Smooth ANR-22-CE40-0017. 

\end{document}